\newtheorem{thm}{Theorem}[section]
\newtheorem{cor}[thm]{Corollary}
\newtheorem{lem}[thm]{Lemma}
\newtheorem{exam}[thm]{Example}
\numberwithin{equation}{section}
\begin{document}

\title{the group inverse of the sum in a Banach algebra}

\author{Huanyin Chen}
\author{Marjan Sheibani$^*$}
\address{
Department of Mathematics\\ Hangzhou Normal University\\ Hang -zhou, China}
\email{<huanyinchen@aliyun.com>}
\address{Women's University of Semnan (Farzanegan), Semnan, Iran}
\email{<sheibani@fgusem.ac.ir>}

 \thanks{$^*$Corresponding author}

\subjclass[2010]{15A09, 47A11, 47A53.} \keywords{group inverse; operator matrix; Banach algebra; Banach space.}

\begin{abstract} In this paper, we present new necessary and sufficient conditions under which the sum of two group invertible elements in a Banach algebra has group inverse. We then apply these results to block operator matrices over Banach spaces. The group inverses of certain operator block matrices are thereby obtained. Additionally, this paper extends the results obtained in ~\cite{L}.\end{abstract}

\maketitle

\section{Introduction}

Let $\mathcal{A}$ be a complex Banach algebra with an identity $1$. An element $a$ in a Banach algebra $\mathcal{A}$ has group inverse provided that there exists $b\in \mathcal{A}$ such that $ab=ba, b=bab$ and $a=aba$. Such $b$ is unique if exists, denoted by $a^{\#}$, and called the group inverse of $a$. As is well known, a square complex matrix $A$ has group inverse if and only if $rank(A)=rank(A^2)$.

The group invertibility in a Banach algebra is attractive. Many authors have studied such problems from many different views, e.g., ~\cite{B,L,LD,M,ZCZ}. It was also extensively investigated under the concept "strongly regularity" in ring theory.

In Section 2, we present necessary and sufficient conditions under which the sum of two group invertible elements in a Banach algebra. Let $a,b\in \mathcal{A}^{\#}$. If $ab=\lambda ba$ for some $\lambda\in {\Bbb C}$, then $a+b\in \mathcal{A}^{\#}$ if and only if $abb^{\#}+baa^{\#}\in \mathcal{A}^{\#}$. In this case, $(a+b)^{\#}=(abb^{\#}+baa^{\#})^{\#}+a^{\#}b^{\pi}+b^{\#}a^{\pi}.$

Let $X$ and $Y$ be Banach spaces. In Section 3, we investigate the group inverse of a $2\times 2$ operator matrix $$M=\left(
\begin{array}{cc}
A&B\\
C&D
\end{array}
\right)~~~~~~~~~~(*)$$  where $A\in \mathcal{L}(X), D\in \mathcal{L}(Y)$. Here, $M$ is a bounded linear operator on $X\oplus Y$. This problem is quite complicated and was expensively studied by many authors. Let $A,D, BC$ have group inverses. If $AB=0, BD=0, B(CB)^{\pi}=0, C(BC)^{\pi}=0$ and $DC=\lambda CA$, we prove that $M$ has group inverse. The explicit formula of the group inverse of $M$ is also given.

If $a\in \mathcal{A}$ has the group inverse $a^{d}$, the element $a^{\pi}=1-aa^{d}$ is called the spectral idempotent of $a$. Let $p\in \mathcal{A}$ be an idempotent, and let $x\in \mathcal{A}$. Then we write $$x=pxp+px(1-p)+(1-p)xp+(1-p)x(1-p),$$
and induce a Pierce representation given by the matrix
$$x=\left(\begin{array}{cc}
pxp&px(1-p)\\
(1-p)xp&(1-p)x(1-p)
\end{array}
\right)_p.$$
Throughout the paper, all Banach algebras are complex with an identity. We use $\mathcal{A}^{\#}$ to denote the set of all group invertible elements in $\mathcal{A}$. $\lambda$ stands for a nonozero complex numbers.

\section{additive properties}

The purpose of this section is to establish the new formulas for the group  of the sum $a+b$ which will be used in the sequel. We begin with

\begin{lem} Let $\mathcal{A}$ be a Banach algebra, and let $a,b\in\mathcal{A}^{\#}$. If $ab=\lambda ba$, then the following hold:\begin{enumerate}
\item [(1)] $abb^{\#}=bb^{\#}a$.
\item [(2)] $baa^{\#}=aa^{\#}b$.
\end{enumerate}
\end{lem}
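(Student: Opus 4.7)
The plan is to exploit the commutation relation $ab=\lambda ba$ together with the defining identities of the group inverse (in particular $bb^{\#}=b^{\#}b$ and $bb^{\#}\cdot b=b=b\cdot bb^{\#}$) in order to show that $a$ commutes with the spectral idempotent $p:=bb^{\#}$.

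For part (1), my strategy is to establish the two one-sided identities
$$pap=ap\qquad\text{and}\qquad pap=pa,$$
from which $ap=pa$, i.e., $abb^{\#}=bb^{\#}a$, follows immediately. To get the first, I would post-multiply $ab=\lambda ba$ by $b^{\#}$, obtaining $ap=\lambda bab^{\#}$, and then pre-multiply by $p$; since $pb=b$ this yields $pap=\lambda bab^{\#}=ap$. To get the second, I would again start from $ap=\lambda bab^{\#}$ but now pre-multiply by $b^{\#}$ to reach $b^{\#}ap=\lambda pab^{\#}$, and then post-multiply by $b$; using $b^{\#}b=p$ and $pb=b$ this gives $b^{\#}ab=\lambda pap$. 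Comparing with the identity $b^{\#}ab=\lambda pa$ obtained by pre-multiplying the original equation by $b^{\#}$, and cancelling the nonzero scalar $\lambda$, gives $pap=pa$.

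For part (2), I would simply observe that the hypothesis $ab=\lambda ba$ rearranges to $ba=\lambda^{-1}ab$, so part (1) applied with the roles of $a$ and $b$ interchanged and with $\lambda^{-1}$ in place of $\lambda$ immediately yields $baa^{\#}=aa^{\#}b$.

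The main tactical point is recognizing that the natural quantity $b^{\#}ab$ admits two evaluations — one as $\lambda pa$, from a single pre-multiplication of the commutation relation, and one as $\lambda pap$, from a pre-multiplication followed by a post-multiplication — and that matching these supplies the missing half of the commutation. No serious obstacle is anticipated beyond the bookkeeping of one-sided multiplications and the standing assumption $\lambda\neq 0$ recorded at the end of the introduction.
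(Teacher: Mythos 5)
Your proof is correct, but it takes a genuinely different route from the paper's. The paper does not argue directly: it invokes an external result (Theorem 2.3 of Cvetkovic-Ilic, cited as [C1]) asserting that the relation $ab=\lambda ba$ passes to the Drazin inverse in the form $ab^{\#}=\frac{1}{\lambda}b^{\#}a$, and then gets (1) in one line, $abb^{\#}=\lambda(ba)b^{\#}=\lambda b\left(\tfrac{1}{\lambda}b^{\#}a\right)=bb^{\#}a$, with (2) declared ``similar.'' You instead show directly that $a$ commutes with the idempotent $p=bb^{\#}$: your two one-sided computations are both sound ($pap=ap$ from $ap=\lambda bab^{\#}$ and $pb=b$; $pap=pa$ from the two evaluations of $b^{\#}ab$ as $\lambda pap$ and $\lambda pa$, cancelling the nonzero $\lambda$), and your reduction of (2) to (1) via $ba=\lambda^{-1}ab$ matches the paper's symmetry argument. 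What your approach buys is self-containment and generality: it is a purely algebraic computation using only the defining equations of the group inverse, valid in any ring, with no appeal to Banach-algebra or Drazin-inverse machinery and no citation. What the paper's approach buys is the stronger intermediate identity $ab^{\#}=\lambda^{-1}b^{\#}a$ --- commutation up to a factor with $b^{\#}$ itself, not merely with the spectral idempotent $bb^{\#}$ --- at the cost of leaning on the quoted theorem.
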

\begin{proof} $(1)$ As $b$ has group inverse, $b^{\#}$ coincides with its Drazin inverse, and so it follows by~\cite[Theorem 2.3]{C1} that
$ab^{\#}=\frac{1}{\lambda}b^{\#}a$. Therefore $$\begin{array}{lll}
abb^{\#}&=&\lambda (ba)b^{\#}\\
&=&\lambda b(\frac{1}{\lambda}b^{\#}a)\\
&=&bb^{\#}a.
\end{array}$$

$(2)$ This is similar to $(1)$.\end{proof}

\begin{thm} Let $a,b\in \mathcal{A}^{\#}$. If $ab=\lambda ba$, then the following are equivalent:\begin{enumerate}
\item [(1)] $a+b\in \mathcal{A}^{\#}$.
\item [(2)] $a(1+a^{\#}b)\in \mathcal{A}^{\#}$.
\item [(3)] $abb^{\#}+baa^{\#}\in \mathcal{A}^{\#}$.\end{enumerate}
In this case, $$\begin{array}{lll}
(a+b)^{\#}&=&a^{\#}b^{\pi}+\big[a(1+a^{\#}b)\big]^{\#}\\
&=&a^{\#}b^{\pi}+a^{\pi}b^{\#}+\big[abb^{\#}+baa^{\#}\big]^{\#}.
\end{array}$$\end{thm}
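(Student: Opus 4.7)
The plan is to decompose $a+b$ into two pieces that annihilate each other under a spectral idempotent, reducing group invertibility of $a+b$ to that of each piece, and then to iterate the same idea inside one corner to identify the second piece with $abb^{\#}+baa^{\#}$.

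Set $p=aa^{\#}$ and $q=bb^{\#}$. By Lemma 2.1, $pb=bp$ and $qa=aq$; and because the group inverse in a Banach algebra lies in the bicommutant of the element, one gets also $pb^{\#}=b^{\#}p$ and $qa^{\#}=a^{\#}q$. In particular the idempotents $p$ and $q$ commute with one another and with all of $a,b,a^{\#},b^{\#}$.

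\emph{First splitting: (1)$\Leftrightarrow$(2).} Write $a+b=c+d$ with $c=a(1+a^{\#}b)=a+aa^{\#}b$ and $d=a^{\pi}b$. Using $pa=a$ and $pb=bp$ one checks $pc=cp=c$ and $pd=dp=0$, whence $cd=dc=0$. So $a+b$ is group invertible iff $c$ and $d$ are, with $(a+b)^{\#}=c^{\#}+d^{\#}$. A direct check of the four axioms, using that $a^{\pi}$ commutes with both $b$ and $b^{\#}$, gives $(a^{\pi}b)^{\#}=a^{\pi}b^{\#}$, so $d$ is always group invertible. Hence (1)$\Leftrightarrow$(2) and
\[
(a+b)^{\#}\;=\;[a(1+a^{\#}b)]^{\#}+a^{\pi}b^{\#}.
\]

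\emph{Second splitting: (2)$\Leftrightarrow$(3).} Inside $p\mathcal{A}p$, $a$ is invertible with inverse $a^{\#}$, and $c=a+pb$. Now split $p\mathcal{A}p$ via the commuting idempotent $q_{1}=pq$. Since $(1-q)b=0$, the element $pb$ vanishes on the $(p-q_{1})$-corner, while $a$ remains invertible there. Reading $c$ and $abb^{\#}+baa^{\#}=bb^{\#}a+aa^{\#}b=aq_{1}+pb$ in this decomposition, both agree on the $q_{1}$-corner, while on the $(p-q_{1})$-corner they equal $a$ and $0$ respectively. Since the restriction of $a$ to $(p-q_{1})$ has inverse $a^{\#}(p-q_{1})=a^{\#}(1-q)=a^{\#}b^{\pi}$ in that corner, we conclude (2)$\Leftrightarrow$(3) and
\[
[a(1+a^{\#}b)]^{\#}\;=\;[abb^{\#}+baa^{\#}]^{\#}+a^{\#}b^{\pi}.
\]
Substituting into the first identity produces the second displayed formula.

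\emph{Main obstacle.} The essential technical point is establishing that $p$ and $q$ commute with all four of $a,b,a^{\#},b^{\#}$: commutation with $a$ and $b$ is Lemma 2.1, and commutation with $a^{\#}$ and $b^{\#}$ follows from the bicommutant property of the group inverse. After that, the rest is bookkeeping: verifying orthogonality of the two summands in each splitting and identifying the restriction of each expression to the appropriate corner of $\mathcal{A}$.
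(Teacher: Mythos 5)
Your proposal follows essentially the same route as the paper: a first Pierce-type splitting along $p=aa^{\#}$, which reduces $a+b$ to $a(1+a^{\#}b)$ plus the always group invertible piece $a^{\pi}b$, and then a second splitting of the $p$-corner along $bb^{\#}$, which identifies the remaining obstruction with $abb^{\#}+baa^{\#}$ and contributes the invertible corner piece with inverse $a^{\#}b^{\pi}$. The computations you sketch (commutativity of $p,q$ with $a,b,a^{\#},b^{\#}$ via Lemma 2.1 and the bicommutant property, $(a^{\pi}b)^{\#}=a^{\pi}b^{\#}$, and the identification $abb^{\#}+baa^{\#}=aq_{1}+pb$) are correct.

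One step, however, is justified by a principle that is false as stated: ``$cd=dc=0$, so $a+b$ is group invertible iff $c$ and $d$ are.'' The ``if'' half is the standard additive result, but the ``only if'' half does not follow from $cd=dc=0$ alone: take any nonzero $c$ with $c^{2}=0$ and $d=-c$; then $cd=dc=0$ and $c+d=0$ is group invertible, while $c$ is not. What you actually need -- and have already set up -- is that $c$ and $d$ are the corner compressions of $a+b$ with respect to the idempotent $p$, which commutes with $a+b$ (indeed $p(a+b)=(a+b)p=c$ plus $pd=dp=0$); then $p$ also commutes with $(a+b)^{\#}$, and $p(a+b)^{\#}p$ and $(1-p)(a+b)^{\#}(1-p)$ are checked directly to be the group inverses of $c$ and $d$. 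Inserting this remark closes the gap; the same point recurs implicitly in your second splitting. Finally, note that your intermediate identity $(a+b)^{\#}=[a(1+a^{\#}b)]^{\#}+a^{\pi}b^{\#}$ differs from the first displayed equality of the theorem, which has $a^{\#}b^{\pi}$ in that place; testing $b=0$ shows your version is the correct one (the paper's first equality appears to be a typo, and it is your version that combines with the second splitting to yield the final displayed formula, which is correct).
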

\begin{proof} $(1)\Leftrightarrow (2)$ Let $p=aa^{\#}$. Then we have
$$a=\left(\begin{array}{cc}
a_1&0\\
0&0
\end{array}
\right)_p, b=\left(\begin{array}{cc}
b_{1}&b_{12}\\
b_{21}&b_2
\end{array}
\right)_p.$$ Since $ab=\lambda ba,$ we get $b_{12}=b_{21}=0$. Thus,
$$a=\left(\begin{array}{cc}
a_1&0\\
0&0
\end{array}
\right)_p, b=\left(\begin{array}{cc}
b_1&0\\
0&b_2
\end{array}
\right)_p,$$
Clearly, $a_1=aa^{\#}aaa^{\#}$, $b_1=aa^{\#}baa^{\#}$ and $b_2=a^{\pi}ba^{\pi}$.
Hence,
$$a+b=\left(
\begin{array}{cc}
aa^{\#}(a+b)aa^{\#}&0\\
0&a^{\pi}ba^{\pi}
\end{array}
\right).$$
In view of Lemma 2.1, $a^{\pi}b=ba^{\pi}$. Then we easily check that $a^{\pi}ba^{\pi}$ has group inverse.
Therefore $a+b$ has group inverse if and only if $aa^{\#}(a+b)aa^{\#}$ has group inverse. In this case,
$$(a+b)^{\#}=a^{\#}b^{\pi}+\big[a(1+a^{\#}b)\big]^{\#}.$$

$(2)\Leftrightarrow (3)$ Write $aa^{\#}(a+b)aa^{\#}=a+aa^{\#}b:=a_1+b_1$. Choose $q=b_1b_1^{\#}$. Then
Then we have
$$a_1=\left(\begin{array}{cc}
a_1'&0\\
0&a_2'
\end{array}
\right)_p, b_1=\left(\begin{array}{cc}
b_{1}'&0\\
0&0
\end{array}
\right)_q,$$ where $a_1=b_1b_1^{\#}ab_1b_1^{\#}, a_2'=b_1^{\pi}ab_1^{\pi}.$
Hence,
$$a_1+b_1=\left(
\begin{array}{cc}
aa^{\#}(a+b)bb^{\#}&0\\
0&b_1^{\pi}ab_1^{\pi}
\end{array}
\right).$$
In view of Lemma 2.1, $b_1^{\pi}ab_1^{\pi}$ has group inverse.
Therefore $a_1+b_1$ has group inverse if and only if $aa^{\#}(a+b)bb^{\#}$ has group inverse. In this case,
$$\begin{array}{lll}
(a_1+b_1)^{\#}&=&a^{\#}b^{\pi}+[aa^{\#}(a+b)bb^{\#}]^{\#}\\
&=&a^{\#}b^{\pi}+[aa^{\#}(a+b)bb^{\#}]^{\#}.
\end{array}$$
Combing the preceding formulas, we derive that
$$(a+b)^{\#}=a^{\#}b^{\pi}+a^{\pi}b^{\#}+[abb^{\#}+baa^{\#}]^{\#},$$ as asserted.\end{proof}

\begin{cor} Let $\mathcal{A}$ be a Banach algebra, and let $a,b\in \mathcal{A}^{\#}$. If $ab=\lambda ba,$ then the following are equivalent:\end{cor}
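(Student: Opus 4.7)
The plan is to bootstrap from Theorem 2.2, which under the hypothesis $ab = \lambda ba$ has already reduced the group invertibility of $a+b$ to three equivalent conditions and supplied an explicit formula $(a+b)^\# = a^\# b^\pi + a^\pi b^\# + [abb^\# + baa^\#]^\#$. The corollary extends this equivalent list, so the main task is to match each new candidate condition to one of the three conditions already handled in Theorem 2.2.

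First I would apply Lemma 2.1 systematically to exploit the commutation identities $abb^\# = bb^\# a$ and $baa^\# = aa^\# b$. These allow one to rewrite the central quantity $abb^\# + baa^\#$ symmetrically as $bb^\# a + aa^\# b$, and to massage $a(1 + a^\# b)$ into $a + aa^\# b$. Most natural candidate conditions, for instance the group invertibility of expressions such as $b + bb^\# a$ or of sums weighted by the spectral idempotents $a^\pi$, $b^\pi$, $aa^\#$, $bb^\#$, should factor through these symmetric rewrites.

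Then I would re-invoke the Pierce decomposition with $p = aa^\#$ used in the proof of Theorem 2.2. Under $ab = \lambda ba$ the element $b$ is block-diagonal with respect to $p$, so any polynomial expression in $a$, $b$, and their spectral idempotents splits into two diagonal blocks. As in the theorem, the $(2,2)$-block is group invertible automatically by Lemma 2.1, so the equivalence of the corollary's conditions reduces to the same question on the $(1,1)$-block. If needed, a secondary decomposition with $q = b_1 b_1^\#$, analogous to the step $(2)\Leftrightarrow(3)$ of the theorem, handles any remaining refinement.

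The main obstacle I anticipate is not the logical equivalence itself but the bookkeeping in producing an explicit formula for $(a+b)^\#$ matching each new form of the condition. This reduces to substituting the Lemma 2.1 identities into the Theorem 2.2 formula and verifying the defining relations $X(a+b) = (a+b)X$, $X(a+b)X = X$, and $(a+b)X(a+b) = a+b$; with the block decomposition fixed, these become routine but tedious rewrites. Notably, the scalar $\lambda$ itself never appears in the final formula because every use of $ab = \lambda ba$ is absorbed into the spectral idempotents via Lemma 2.1.
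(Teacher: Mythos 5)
There is a genuine gap: your plan never engages with what this corollary actually asserts. The statement (whose items follow the header in the paper) is the \emph{difference} version of Theorem 2.2: under $ab=\lambda ba$, $a-b\in\mathcal{A}^{\#}$ if and only if $a(1-a^{\#}b)\in\mathcal{A}^{\#}$, in which case $(a-b)^{\#}=a^{\#}b^{\pi}-a^{\pi}b^{\#}+[a(1-a^{\#}b)]^{\#}$. The paper's proof is a one-line specialization: since $-b\in\mathcal{A}^{\#}$ with $(-b)^{\#}=-b^{\#}$, $(-b)^{\pi}=1-(-b)(-b^{\#})=b^{\pi}$, and $a(-b)=\lambda(-b)a$, one simply applies Theorem 2.2 to the pair $(a,-b)$ and reads off both the equivalence and the formula (the sign changes producing $-a^{\pi}b^{\#}$ and $1-a^{\#}b$). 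That substitution $b\mapsto -b$ is the entire content, and it is absent from your proposal.

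What you propose instead --- re-running the Pierce decomposition with $p=aa^{\#}$, a secondary splitting with $q=b_1b_1^{\#}$, and matching ``new candidate conditions'' such as the invertibility of $b+bb^{\#}a$ to the three conditions of Theorem 2.2 --- is aimed at a different, unspecified statement about $a+b$, and even for that imagined target it is only a plan: the decisive equivalences are deferred to ``routine but tedious rewrites'' without ever fixing which conditions are to be compared. No amount of that bookkeeping produces the assertion about $a-b$ unless you first observe that $-b$ inherits group invertibility and the commutation relation, at which point all of the block-matrix work becomes unnecessary because Theorem 2.2 already applies verbatim. (Your closing remark that $\lambda$ does not appear in the final formula is true, but it does not compensate for the missing reduction.)
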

\begin{enumerate}
\item [(1)]{\it $a-b\in \mathcal{A}^{\#}.$}
\vspace{-.5mm}
\item [(2)]{\it $a(1-a^{\#}b)\in \mathcal{A}^{\#}.$}
\end{enumerate} In this case, $$(a-b)^{\#}=a^{\#}b^{\pi}-a^{\pi}b^{\#}+[a(1-a^{\#}b)]^{\#}.$$
\begin{proof} Clearly, $-b\in \mathcal{A}^{\#}.$ Applying Theorem 2.2 to $a$ and $-b$, we obtain the result.\end{proof}

\begin{cor} Let $a,b\in \mathcal{A}^{\#}.$ If $ab=ba$, then the following are equivalent: \end{cor}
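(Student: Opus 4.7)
The plan is to obtain Corollary 2.4 as a direct specialization of Theorem 2.2 to the commutative case $\lambda=1$, together with the symmetry available when $ab=ba$. Since $ab=ba$ is exactly the hypothesis of Theorem 2.2 with $\lambda=1$, the equivalences $(1)\Leftrightarrow(2)\Leftrightarrow(3)$ established there transfer at once, and the formula for $(a+b)^{\#}$ specializes by substituting $\lambda=1$ throughout.

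First I would invoke Theorem 2.2 verbatim: $a+b\in\mathcal{A}^{\#}$ iff $a(1+a^{\#}b)\in\mathcal{A}^{\#}$ iff $abb^{\#}+baa^{\#}\in\mathcal{A}^{\#}$, and in that case
\[
(a+b)^{\#}=a^{\#}b^{\pi}+a^{\pi}b^{\#}+\bigl[abb^{\#}+baa^{\#}\bigr]^{\#}.
\]
Next I would exploit the symmetry in $a$ and $b$ that is present only in the commuting case: applying Theorem 2.2 with $a$ and $b$ interchanged yields the further equivalent condition $b(1+b^{\#}a)\in\mathcal{A}^{\#}$, and the corresponding alternative formula
\[
(a+b)^{\#}=b^{\#}a^{\pi}+\bigl[b(1+b^{\#}a)\bigr]^{\#}.
\]
These supply any extra items in the cut-off enumeration.

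The Lemma 2.1 identities simplify nicely when $\lambda=1$: $ab^{\#}=b^{\#}a$, $a^{\#}b=ba^{\#}$, $a^{\pi}b=ba^{\pi}$, and $b^{\pi}a=ab^{\pi}$. I would use these to rewrite $abb^{\#}+baa^{\#}$ as $bb^{\#}a+aa^{\#}b$ and to check that every term in the formula for $(a+b)^{\#}$ is unambiguous (the two summands commute with each other and with $a$ and $b$ on the relevant pieces), confirming the expression is well defined and symmetric.

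I do not anticipate a serious obstacle: the content of the corollary is essentially contained in Theorem 2.2, and the only work is bookkeeping to match notation and to verify that swapping $a\leftrightarrow b$ in the commutative setting produces the same spectral idempotents and therefore a genuinely equivalent formulation. The one point to be careful about is ensuring that all the Pierce decompositions used in the proof of Theorem 2.2 remain valid under $\lambda=1$, which is automatic since $\lambda=1$ is a permitted value there.
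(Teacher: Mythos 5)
There is a genuine gap: you have misidentified what the corollary actually asserts, and consequently your argument never proves it. The missing item in the truncated enumeration is not $a(1+a^{\#}b)\in\mathcal{A}^{\#}$ but $1+a^{\#}b\in\mathcal{A}^{\#}$ (no factor $a$ in front), and the asserted formula is $(a+b)^{\#}=a^{\#}(1+a^{\#}b)^{\#}bb^{\#}+a^{\#}b^{\pi}+b^{\#}a^{\pi}$, which involves $(1+a^{\#}b)^{\#}$ rather than $\bigl[a(1+a^{\#}b)\bigr]^{\#}$ or $\bigl[abb^{\#}+baa^{\#}\bigr]^{\#}$. Quoting Theorem 2.2 with $\lambda=1$, and adding the symmetric statement with $a$ and $b$ interchanged, therefore does not yield the corollary: the whole content is the passage between group invertibility of $a+b$ and group invertibility of the element $1+a^{\#}b$ itself, and that step is absent from your proposal.

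Concretely, the paper proves $(1)\Rightarrow(2)$ by writing $1+a^{\#}b=a^{\pi}+a^{\#}(a+b)$, observing that $a^{\pi}\,a^{\#}(a+b)=a^{\#}(a+b)\,a^{\pi}=0$ and that both summands are group invertible (here commutativity gives $a^{\#}b=ba^{\#}$, so $a^{\#}(a+b)$ is a product of commuting group invertible elements), and then invoking the additive result \cite[Theorem 2.1]{B} for orthogonal summands. For $(2)\Rightarrow(1)$ it uses Lemma 2.1 to get the factorization $abb^{\#}+baa^{\#}=abb^{\#}(1+a^{\#}b)$, notes that $abb^{\#}$ and $1+a^{\#}b$ are commuting group invertible elements so their product is group invertible, and only then applies Theorem 2.2 (condition (3)); the formula follows from $(abb^{\#}+baa^{\#})^{\#}=a^{\#}(1+a^{\#}b)^{\#}bb^{\#}$ substituted into the Theorem 2.2 expression. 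None of these steps (the orthogonal decomposition of $1+a^{\#}b$, the product factorization, or the computation of $(abb^{\#}+baa^{\#})^{\#}$ in terms of $(1+a^{\#}b)^{\#}$) appears in your proposal, so as written it establishes only a restatement of Theorem 2.2, not Corollary 2.4.
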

\begin{enumerate}
\item [(1)]{\it $a+b\in \mathcal{A}^{\#}$.}
\vspace{-.5mm}
\item [(2)]{\it $1+a^{\#}b\in \mathcal{A}^{\#}$.}
\end{enumerate}
In this case, $$(a+b)^{\#}=a^{\#}(1+a^{\#}b)^{\#}bb^{\#}+a^{\#}b^{\pi}+b^{\#}a^{\pi}.$$
\begin{proof} $(1)\Rightarrow (2)$ We check that $$1+a^{\#}b=a^{\pi}+a^{\#}(a+b).$$  Since $a^{\pi}a^{\#}(a+b)=0=a^{\#}(a+b)a^{\pi}$, it follows by~\cite[Theorem 2.1]{B} that $1+a^{\#}b\in \mathcal{A}^{\#}$.

$(2)\Rightarrow (1)$ In view of Lemma 2.1, we see that $$abb^{\#}+baa^{\#}=abb^{\#}(1+a^{\#}b).$$
Clearly, $abb^{\#}\in \mathcal{A}^{\#}$. Since $abb^{\#}, 1+a^{\#}b\in  \mathcal{A}^{\#}$ and $abb^{\#}(1+a^{\#}b)=(1+a^{\#}b)abb^{\#}$, it follows that
$abb^{\#}(1+a^{\#}b)\in \mathcal{A}^{\#}$. Hence $abb^{\#}+baa^{\#}\in \mathcal{A}^{\#}$.
Moreover, $$(abb^{\#}+baa^{\#})^{\#}=[abb^{\#}(1+a^{\#}b)]^{\#}=a^{\#}(1+a^{\#}b)^{\#}bb^{\#}.$$
According to Theorem 2.2, we complete the proof.\end{proof}

We come now to extend ~\cite[Corollary 3.3]{L} to the group inverse of an element in Banach algebra.

\begin{cor} Let $a,b\in \mathcal{A}^{\#}$ and $\lambda,\mu\in {\Bbb C}\setminus \{0\}$. If $ab=ba$, then the following are equivalent: \end{cor}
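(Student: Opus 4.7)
The plan is to reduce Corollary 2.5 to Corollary 2.4 by rescaling, exploiting that $\lambda$ and $\mu$ are nonzero (hence invertible) complex scalars, so that no genuinely new algebra is needed.

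First I would record the elementary identities: since $\lambda\neq 0$, the element $\lambda a$ lies in $\mathcal{A}^{\#}$ with $(\lambda a)^{\#}=\lambda^{-1}a^{\#}$ and spectral idempotent $(\lambda a)^{\pi}=1-(\lambda a)(\lambda^{-1}a^{\#})=a^{\pi}$; likewise $(\mu b)^{\#}=\mu^{-1}b^{\#}$ and $(\mu b)^{\pi}=b^{\pi}$. Because $a$ and $b$ commute, so do $\lambda a$ and $\mu b$. Thus the pair $(\lambda a,\mu b)$ satisfies every hypothesis of Corollary 2.4.

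Next I would apply Corollary 2.4 to $(\lambda a,\mu b)$ in place of $(a,b)$. The equivalence there immediately yields
$$\lambda a+\mu b\in\mathcal{A}^{\#}\;\Longleftrightarrow\;1+(\lambda a)^{\#}(\mu b)\in\mathcal{A}^{\#},$$
that is, $1+\mu\lambda^{-1}a^{\#}b\in\mathcal{A}^{\#}$; this is the form in which I expect condition (2) to be phrased, and if a symmetric condition in $ab^{\#}$ (or $b^{\#}a$) also appears in the statement, the same argument with the roles of $a$ and $b$ exchanged delivers it at once, since Corollary 2.4 is symmetric in its two arguments. Substituting into the explicit formula of Corollary 2.4 gives
$$(\lambda a+\mu b)^{\#}=\lambda^{-1}a^{\#}\bigl(1+\mu\lambda^{-1}a^{\#}b\bigr)^{\#}bb^{\#}+\lambda^{-1}a^{\#}b^{\pi}+\mu^{-1}b^{\#}a^{\pi}.$$

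The only obstacle is bookkeeping: tracking the scalar factors $\lambda^{\pm 1}$ and $\mu^{\pm 1}$ through the formula and confirming that the spectral idempotents are preserved under nonzero scalar multiplication. Since everything reduces to the commuting case already proved in Corollary 2.4, no new technical device is required beyond this rescaling trick.
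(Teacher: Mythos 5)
Your rescaling idea is exactly the paper's: the published proof is one line, ``since $ab=ba$ we have $(\lambda a)(\mu b)=(\mu b)(\lambda a)$, $(\lambda a)^{\#}=\frac{1}{\lambda}a^{\#}$, $(\mu b)^{\#}=\frac{1}{\mu}b^{\#}$, now apply Theorem 2.2 to $\lambda a,\mu b$.'' The only divergence is which earlier result you feed the rescaled pair into, and this matters for landing on the corollary as actually stated. The paper's condition (2) is $\lambda abb^{\#}+\mu baa^{\#}\in\mathcal{A}^{\#}$, with the formula $(\lambda a+\mu b)^{\#}=(\lambda abb^{\#}+\mu baa^{\#})^{\#}+\frac{1}{\lambda}a^{\#}b^{\pi}+\frac{1}{\mu}b^{\#}a^{\pi}$; this is precisely condition (3) of Theorem 2.2 evaluated at $\lambda a,\mu b$, since $(\lambda a)(\mu b)(\mu b)^{\#}+(\mu b)(\lambda a)(\lambda a)^{\#}=\lambda abb^{\#}+\mu baa^{\#}$ and $(\lambda a)^{\pi}=a^{\pi}$, $(\mu b)^{\pi}=b^{\pi}$, as you correctly computed. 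You instead routed through Corollary 2.4 and guessed the criterion $1+\frac{\mu}{\lambda}a^{\#}b\in\mathcal{A}^{\#}$; what you prove is a correct and equivalent statement, but to recover the printed one you would still need the bridge $\lambda abb^{\#}+\mu baa^{\#}=\lambda abb^{\#}\bigl(1+\frac{\mu}{\lambda}a^{\#}b\bigr)$ and the identification $(\lambda abb^{\#}+\mu baa^{\#})^{\#}=\frac{1}{\lambda}a^{\#}\bigl(1+\frac{\mu}{\lambda}a^{\#}b\bigr)^{\#}bb^{\#}$ --- which is exactly the content of the proof of Corollary 2.4, i.e.\ Theorem 2.2's equivalence $(2)\Leftrightarrow(3)$ for the rescaled pair. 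So simply cite Theorem 2.2 (conditions (1) and (3)) instead of Corollary 2.4 and your argument coincides with the paper's; as a side remark, your formula also shows that the paper's printed term $\frac{1}{\lambda}ab^{\#}$ should read $\frac{1}{\lambda}a^{\#}b^{\pi}$.
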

\begin{enumerate}
\item [(1)]{\it $\lambda a+\mu b\in \mathcal{A}^{\#}$.}
\vspace{-.5mm}
\item [(2)]{\it $\lambda abb^{\#}+\mu baa^{\#}\in \mathcal{A}^{\#}$.}
\end{enumerate}
In this case, $$(\lambda a+\mu b)^{\#}=(\lambda abb^{\#}+\mu baa^{\#})^{\#}+\frac{1}{\lambda}ab^{\#}+\frac{1}{\mu}b^{\#}a^{\pi}.$$
\begin{proof} Since $ab=ba$, we have $(\lambda a)(\mu b)=(\mu b)(\lambda a)$. Clearly, $(\lambda a)^{\#}=\frac{1}{\lambda}a^{\#}, (\mu b)^{\#}=\frac{1}{\mu}b^{\#}$. Applying Theorem 2.2 to $\lambda a, \mu b$, we obtain the result.\end{proof}

\begin{exam} Let $a=\left(
\begin{array}{cc}
0&1\\
1&0
\end{array}
\right), b=\left(
\begin{array}{cc}
-1&0\\
0&1
\end{array}
\right)\in {\Bbb C}^{2\times 2}$. Then $ab=-ba$ and $$a^{\#}=\left(
\begin{array}{cc}
0&1\\
1&0
\end{array}
\right), b^{\#}=\left(
\begin{array}{cc}
-1&0\\
0&1
\end{array}
\right).$$ In view of Theorem 2.2, $a+b$ has group inverse. Evidently,
$$(a+b)^{\#}=\left(
\begin{array}{cc}
-\frac{1}{2}&\frac{1}{2}\\
\frac{1}{2}&\frac{1}{2}
\end{array}
\right).$$ In this case, $ab\neq ba$.\end{exam}

\section{Block operator matrices}

The aim of this section is to present certain simpler representations of the group inverse of the block matrix $M$ by applying Theorem 2.2.

\begin{lem} Let $A$ and $D$ have group inverses. If $D^{\pi}C=0$, then $\left(
  \begin{array}{cc}
    A & 0 \\
    C & D
  \end{array}
\right)$ has group inverse and
$$\begin{array}{l}
\left(
  \begin{array}{cc}
    A & 0 \\
    C & D
  \end{array}
\right)^{\#}=\left(
\begin{array}{cc}
A^{\#}&0\\
-D^{\#}CA^{\#}+(D^{\#})^2CA^{\pi}&D^{\#}
\end{array}
\right).
\end{array}$$\end{lem}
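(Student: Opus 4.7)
The plan is to verify directly that the proposed matrix
$$X = \left(\begin{array}{cc}
A^{\#} & 0 \\
-D^{\#}CA^{\#} + (D^{\#})^{2}CA^{\pi} & D^{\#}
\end{array}\right)$$
satisfies the three defining relations $MX = XM$, $MXM = M$, and $XMX = X$ of the group inverse, where $M$ denotes the block operator in the statement. Since a group inverse is unique when it exists, verifying these three identities simultaneously proves both the existence and the claimed formula.

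First I would compute $MX$ and $XM$ block by block, reducing the off-diagonal entries using two key simplifications: the hypothesis $D^{\pi}C = 0$, equivalently $DD^{\#}C = C$, and the universally valid identity $A^{\pi}A = 0$. The expectation is that both products collapse to the common matrix
$$E = \left(\begin{array}{cc}
AA^{\#} & 0 \\
D^{\#}CA^{\pi} & DD^{\#}
\end{array}\right),$$
which establishes the commutativity $MX = XM$. In particular, in $MX$ the $(2,1)$ block $CA^{\#} + D\bigl(-D^{\#}CA^{\#} + (D^{\#})^{2}CA^{\pi}\bigr)$ reorganizes as $D^{\pi}CA^{\#} + D^{\#}CA^{\pi}$, and the hypothesis kills the first summand; in $XM$ the $(2,1)$ block collapses similarly once $A^{\pi}A = 0$ is used.

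Next I would verify $MXM = M$ from the form $MX = E$: the diagonal blocks reduce immediately via $AA^{\#}A = A$ and $DD^{\#}D = D$, while the $(2,1)$ block becomes $D^{\#}CA^{\pi}A + DD^{\#}C$, which simplifies to $C$ after one application of $A^{\pi}A = 0$ and one of $DD^{\#}C = C$. Finally $XMX = X$ follows from $XM = E$ together with $A^{\#}AA^{\#} = A^{\#}$, $D^{\#}DD^{\#} = D^{\#}$, and a further use of $A^{\pi}A = 0$ in the $(2,1)$ entry, which eliminates the only cross-term and returns $-D^{\#}CA^{\#} + (D^{\#})^{2}CA^{\pi}$.

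The work is essentially routine block algebra; the one point that demands care is the $(2,1)$ entry at each stage, where the hypothesis $D^{\pi}C = 0$ and the identity $A^{\pi}A = 0$ must be deployed in the correct order so that the auxiliary terms involving $D^{\pi}CA^{\#}$ and $A^{\pi}A$ cancel. I do not anticipate any substantive obstacle, which is appropriate given that this lemma is intended as a tool for the more involved block-matrix results of Section 3.
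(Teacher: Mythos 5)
Your argument is correct, and the computations do go through exactly as you predict: both $MX$ and $XM$ collapse to $E=\left(\begin{array}{cc} AA^{\#}&0\\ D^{\#}CA^{\pi}&DD^{\#}\end{array}\right)$, using $D^{\pi}C=0$ in the $(2,1)$ entry of $MX$ and $A^{\pi}A=0$ in that of $XM$; then $EM=M$ and $EX=X$ yield $MXM=M$ and $XMX=X$, so uniqueness of the group inverse gives the formula. The only slip of notation is in the final step: if you multiply $XM=E$ by $X$ on the right, the surviving cross-term in the $(2,1)$ entry is $D^{\#}CA^{\pi}A^{\#}$, so the identity actually invoked is $A^{\pi}A^{\#}=0$, which is an immediate consequence of $A^{\pi}A=0$ (multiply by $(A^{\#})^{2}$), so nothing substantive is affected. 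Where you genuinely differ from the paper is that the paper performs no computation at all: its entire proof of this lemma is the citation ``This is obvious by \cite[Theorem 3.4]{B}'', i.e.\ the statement is read off as a special case of a result of Benitez, Liu and Zhu. Your direct verification of the three defining equations buys a self-contained proof that simultaneously certifies the displayed formula --- in particular the correction term $(D^{\#})^{2}CA^{\pi}$, which is easy to get wrong and worth confirming independently --- at the cost of a page of routine block algebra; the paper's route buys brevity but leaves the reader to extract the special case from \cite{B} and to check that its hypotheses are met under $D^{\pi}C=0$.
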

\begin{proof} This is obvious by ~\cite[Theorem 3.4]{B}.\end{proof}

Recall that $a\in \mathcal{A}$ is called Drazin invertible provided that there is a common solution to the
equations $ab=ba, b=bab$ and $a^k=a^kba$ for som $k\geq 1$. If such a solution exists, then it is unique and is called a 
Drazin inverse of $a$, denoted it by $b=a^D$ (see~\cite{C2,J}). We now prove the main result of this section.

\begin{thm} Let $A,D$ and $BC$ have group inverses. If $AB=0, BD=0, B(CB)^{\pi}=0, C(BC)^{\pi}=0$ and $DC=\lambda CA$, then $M\in M_2(\mathcal{A})^{\#}$ and
$$M^{\#}=\left(
\begin{array}{cc}
2A^{\#}&0\\
-D^{\#}CA^{\#}&D^{\#}+D^{\#}(CB)^{\pi}
\end{array}
\right).$$\end{thm}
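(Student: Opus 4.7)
The plan is to realize $M$ as a sum $P+Q$ of two matrices fitting the hypotheses of Theorem 2.2. The natural choice is
$$P=\begin{pmatrix}A&0\\0&D\end{pmatrix},\qquad Q=\begin{pmatrix}0&B\\C&0\end{pmatrix},$$
so that $P^{\#}=\mathrm{diag}(A^{\#},D^{\#})$ is immediate, and the identities $AB=0$, $BD=0$, $DC=\lambda CA$ ensure $PQ=\lambda QP$ by a direct block multiplication (the $(1,2)$ blocks of both products are killed by $AB=0$ and $BD=0$, and the $(2,1)$ block gives the factor $\lambda$). The first genuine task is therefore to exhibit $Q^{\#}$.

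Here I would propose
$$Q^{\#}=\begin{pmatrix}0&(BC)^{\#}B\\(CB)^{\#}C&0\end{pmatrix},$$
where $(CB)^{\#}$ is obtained via the Cline-type identity $(CB)^{\#}=C(BC)^{\#2}B$. Using $(BC)^{\#}B=B(CB)^{\#}$ and the commutation of $BC$ with $(BC)^{\#}$, the four group-inverse axioms for $Q$ reduce to the two identities $BCB(CB)^{\#}=B$ and $CBC(BC)^{\#}=C$, i.e.\ exactly $B(CB)^{\pi}=0$ and $C(BC)^{\pi}=0$. This is the most delicate part of the plan and is where those two hypotheses enter.

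With $P,Q\in M_2(\mathcal{A})^{\#}$ and $PQ=\lambda QP$, Theorem 2.2 gives
$$M^{\#}=P^{\#}Q^{\pi}+Q^{\#}P^{\pi}+\bigl(PQQ^{\#}+QPP^{\#}\bigr)^{\#}.$$
An important intermediate observation is that the hypotheses force $DC=0$ and $CA=0$: from $AB=0$ one gets $DCB=\lambda CAB=0$, so $DD^{\#}CB=0$; combined with $C=CBC(BC)^{\#}$ coming from $C(BC)^{\pi}=0$ this yields $DD^{\#}C=0$, and the relation $DD^{\#}DC=DC=\lambda CA$ (with $\lambda\neq 0$) collapses to $CA=0$, whence also $DC=0$. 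Consequently $PQ=0$ and $QPP^{\#}=0$, so the correction term in Theorem 2.2 vanishes.

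The closed form then comes from block-computing $P^{\#}Q^{\pi}$ and $Q^{\#}P^{\pi}$. The simplifications $A^{\#}B=0$ (from $AB=0$), $BD^{\#}=0$ (from $BD=0$), $D^{\#}C=0$ (from $DC=0$), $CA^{\#}=0$ (from $CA=0$), together with $A^{\#}(BC)^{\pi}=A^{\#}$, $D^{\#}(CB)^{\pi}=D^{\#}$, $BD^{\pi}=B$ and $CA^{\pi}=C$, reduce the abstract sum to an explicit block matrix that matches the one in the statement. The main obstacle I expect is the explicit verification that the proposed $Q^{\#}$ is genuinely the group inverse of $Q$; once that is in hand, the rest is a careful but routine expansion.
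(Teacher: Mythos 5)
Your overall strategy is sound: the splitting $M=P+Q$ with $P=\mathrm{diag}(A,D)$ and $Q$ the antidiagonal part is exactly the right move, $PQ=\lambda QP$ does follow from $AB=0$, $BD=0$, $DC=\lambda CA$, your verification that $Q^{\#}=\bigl(\begin{smallmatrix}0&B(CB)^{\#}\\ C(BC)^{\#}&0\end{smallmatrix}\bigr)$ reduces to $B(CB)^{\pi}=0$ and $C(BC)^{\pi}=0$ is correct (with the small caveat that Cline's formula only gives the Drazin inverse of $CB$, and you need $CB(CB)^{\pi}=0$, which follows from $B(CB)^{\pi}=0$, to upgrade it to a group inverse), and your observation that the hypotheses force $DD^{\#}C=0$, hence $DC=CA=0$, is correct and even makes the existence of $M^{\#}$ immediate, since then $PQQ^{\#}+QPP^{\#}=0$. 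The genuine gap is your last sentence: the block matrix you get does \emph{not} match the one in the statement, and no routine expansion will make it do so. Carrying out your own listed simplifications,
$$P^{\#}Q^{\pi}+Q^{\#}P^{\pi}=\begin{pmatrix} A^{\#}(BC)^{\pi} & B(CB)^{\#}D^{\pi}\\ C(BC)^{\#}A^{\pi} & D^{\#}(CB)^{\pi}\end{pmatrix}=\begin{pmatrix} A^{\#} & B(CB)^{\#}\\ C(BC)^{\#} & D^{\#}\end{pmatrix},$$
whose $(1,1)$ entry is $A^{\#}$, not $2A^{\#}$, whose $(2,2)$ entry is $D^{\#}$, not $D^{\#}+D^{\#}(CB)^{\pi}=2D^{\#}$, and whose off-diagonal entries are in general nonzero, while the stated $(2,1)$ entry $-D^{\#}CA^{\#}$ is actually $0$ because $D^{\#}C=0$. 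In fact the stated formula is false: take $A=0$, $D=0$, $B=C=I$; every hypothesis holds, $M=\bigl(\begin{smallmatrix}0&I\\ I&0\end{smallmatrix}\bigr)$ is its own group inverse, yet the displayed formula gives $0$. So your computation is right, the target is wrong, and asserting a match without writing out the four entries conceals exactly this contradiction.

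For comparison, the paper's proof uses the same $P,Q$ but then applies the truncated identity $M^{\#}=[P(I+P^{\#}Q)]^{\#}+P^{\#}Q^{\pi}$ taken from the first display in Theorem 2.2; that two-term formula is itself erroneous (the group inverse of the corner $a^{\pi}ba^{\pi}$ is $b^{\#}a^{\pi}$, not $a^{\#}b^{\pi}$), and dropping the summand $Q^{\#}P^{\pi}$ is precisely where the blocks $B(CB)^{\#}$ and $C(BC)^{\#}$ disappear, producing the $2A^{\#}$ formula. You correctly used the full three-term formula $(a+b)^{\#}=a^{\#}b^{\pi}+a^{\pi}b^{\#}+(abb^{\#}+baa^{\#})^{\#}$, which is valid; but then the honest conclusion of your argument is $M^{\#}=\bigl(\begin{smallmatrix} A^{\#} & B(CB)^{\#}\\ C(BC)^{\#} & D^{\#}\end{smallmatrix}\bigr)$, i.e.\ a corrected theorem, not a proof of the statement as displayed.
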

\begin{proof} Write $M=P+Q$, where $$P=\left(
  \begin{array}{cc}
    A & 0 \\
    0 & D
  \end{array}
\right), Q=\left(
  \begin{array}{cc}
   0 & B \\
   C & 0
  \end{array}
\right).$$ Then $$P^{\#}=\left(
  \begin{array}{cc}
    A^{\#} & 0 \\
    0 & D^{\#}
  \end{array}
\right), P^{\pi}=\left(
  \begin{array}{cc}
    A^{\pi} & 0 \\
    0 & D^{\pi}
  \end{array}
\right).$$ Since $BC$ has group inverse, we have $$(Q^2)^D=\left(
  \begin{array}{cc}
   BC & 0 \\
   0 & CB
  \end{array}
\right)^D=\left(
  \begin{array}{cc}
   (BC)^{\#} & 0 \\
   0 & (CB)^{\#}
  \end{array}
\right).$$ In view of~|cite[Theorem 2.1]{J}, $Q$ has Drazin inverse and $$Q^D=Q(Q^2)^D=\left(
  \begin{array}{cc}
  0& B(CB)^{\#} \\
  C(BC)^{\#}&0
  \end{array}
\right).$$ Moreover, we have $$Q^{\pi}=\left(
  \begin{array}{cc}
  (BC)^{\pi}&0\\
  0&(CB)^{\pi}
  \end{array}
\right).$$
Hence, $$QQ^D=Q^DQ, Q^D=Q^DQQ^D.$$ It is easy to verify that
$$\begin{array}{lll}
QQ^{\pi}&=&\left(
  \begin{array}{cc}
   0 & B \\
   C & 0
  \end{array}
\right)\left(
  \begin{array}{cc}
   (BC)^{\pi} & 0 \\
  0 & (CB)^{\pi}
  \end{array}
\right)\\
&=&\left(
  \begin{array}{cc}
  0&B(CB)^{\pi}\\
  C(BC)^{\pi} & 0
  \end{array}
\right)\\
&=&0;
\end{array}$$ hence, $Q=Q^2Q^D$. This implies that $Q$ has group inverse and $$Q^{\#}=\left(
  \begin{array}{cc}
  0& B(CB)^{\#} \\
  C(BC)^{\#}&0
  \end{array}
\right).$$
Therefore we get
$$P^{\#}Q^{\pi}=
\left(
  \begin{array}{cc}
  A^{\#}(BC)^{\pi}&0\\
  0&D^{\#}(CB)^{\pi}
  \end{array}
\right).$$
We easily check that
$$PQ=\left(
  \begin{array}{cc}
   0 & 0 \\
    DC & 0
  \end{array}
\right)=\lambda\left(
  \begin{array}{cc}
  0 & 0 \\
    CA & 0
  \end{array}
\right)=\lambda QP.$$ It follows by Lemma 2.1 that
$PQQ^{\#}=QQ^{\#}P$ and $QPP^{\#}=PP^{\#}Q$.
Then we have
$$\begin{array}{c}
BCA=0,CAA^{\#}=DD^{\#}C,\\
D(CB)(CB)^{\#}=(CB)(CB)^{\#}D.
\end{array}$$
Obviously, we have $$\begin{array}{ll}
&\left(
\begin{array}{cc}
A&AA^{\#}B\\
DD^{\#}C&D
\end{array}
\right)\\
=&\left(
\begin{array}{cc}
A&0\\
0&D
\end{array}
\right)[\left(
\begin{array}{cc}
I&0\\
0&I
\end{array}
\right)+\left(
\begin{array}{cc}
A^{\#}&0\\
0&D^{\#}
\end{array}
\right)\left(
\begin{array}{cc}
0&B\\
C&0
\end{array}
\right)].
\end{array}$$
According to Theorem 2.2,
$M$ has group inverse if and only if $\left(
\begin{array}{cc}
A&0\\
DD^{\#}C&D
\end{array}
\right)$ has group inverse.
Obviously, $D^{\pi}(DD^{\#}C)=0$. In view of Lemma 3.1,
$\left(
\begin{array}{cc}
A&0\\
DD^{\#}C&D
\end{array}
\right)$ has group inverse. Moreover, we have
$$\left(
\begin{array}{cc}
A&0\\
DD^{\#}C&D
\end{array}
\right)^{\#}=\left(
\begin{array}{cc}
A^{\#}&0\\
-D^{\#}CA^{\#}+(D^{\#})^2CA^{\pi}&D^{\#}
\end{array}
\right).$$
Therefore
$$\begin{array}{lll}
M^{\#}&=&\big[P(I+P^{\#}Q)\big]^{\#}+P^{\#}Q^{\pi}\\
&=&\left(
\begin{array}{cc}
A&0\\
DD^{\#}C&D
\end{array}
\right)^{\#}+P^{\#}Q^{\pi}\\
&=&\left(
\begin{array}{cc}
A^{\#}+A^{\#}(BC)^{\pi}&0\\
-D^{\#}CA^{\#}+(D^{\#})^2CA^{\pi}&D^{\#}+D^{\#}(CB)^{\pi}
\end{array}
\right),
\end{array}$$ as asserted.\end{proof}

\begin{cor} Let $A,D$ and $BC$ have group inverses. If $CA=0, DC=0, B(CB)^{\pi}=0, C(BC)^{\pi}=0$ and $AB=\lambda BD$, then $M\in M_2(\mathcal{A})^{\#}$ and
$$M^{\#}=\left(
\begin{array}{cc}
A^{\#}+A^{\#}(BC)^{\pi}&-A^{\#}BD^{\#}\\
0&2D^{\#}
\end{array}
\right).$$\end{cor}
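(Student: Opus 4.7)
The plan is to reduce Corollary 3.3 to Theorem 3.2 by conjugating with the block-swap involution, thereby avoiding a repetition of the whole argument. Write $J=\left(\begin{smallmatrix}0 & I_Y\\ I_X & 0\end{smallmatrix}\right)$, regarded as an isomorphism $Y\oplus X\to X\oplus Y$ (equivalently, a self-inverse element of the appropriate block algebra). Since $J^{-1}=J$, conjugation by $J$ preserves group invertibility, so $M\in M_2(\mathcal{A})^{\#}$ if and only if $JMJ\in M_2(\mathcal{A})^{\#}$, and in that case $M^{\#}=J(JMJ)^{\#}J$.

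A direct computation yields
$$JMJ=\begin{pmatrix}D & C\\ B & A\end{pmatrix}=:M'.$$
I would next verify that $M'$ satisfies the hypotheses of Theorem 3.2 with the four blocks relabelled $(A',B',C',D')=(D,C,B,A)$. Under this relabelling, the assumed identities $DC=0$, $CA=0$, $C(BC)^{\pi}=0$, $B(CB)^{\pi}=0$, $AB=\lambda BD$ translate, one by one, into the conditions $A'B'=0$, $B'D'=0$, $B'(C'B')^{\pi}=0$, $C'(B'C')^{\pi}=0$, $D'C'=\lambda C'A'$ required by Theorem 3.2. The group invertibility of $A$, $D$ and $BC$ is carried over directly to $A'=D$, $D'=A$ and $B'C'=CB$ (the existence of $(CB)^{\#}$ is implicit in the hypotheses, just as in Theorem 3.2). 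Theorem 3.2 therefore applies to $M'$.

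Reading off its conclusion and substituting back the original letters gives
$$(M')^{\#}=\begin{pmatrix}2D^{\#} & 0\\ -A^{\#}BD^{\#} & A^{\#}+A^{\#}(BC)^{\pi}\end{pmatrix}.$$
Conjugation by $J$ acts on a $2\times 2$ block matrix by the rule $\left(\begin{smallmatrix}x & y\\ z & w\end{smallmatrix}\right)\mapsto\left(\begin{smallmatrix}w & z\\ y & x\end{smallmatrix}\right)$, so
$$M^{\#}=J(M')^{\#}J=\begin{pmatrix}A^{\#}+A^{\#}(BC)^{\pi} & -A^{\#}BD^{\#}\\ 0 & 2D^{\#}\end{pmatrix},$$
which is the asserted formula.

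There is no genuinely difficult step in this argument; the entire content of the corollary is that its hypotheses form the exact mirror image of those of Theorem 3.2 after a row/column swap. The only point requiring care is the line-by-line matching of the five block identities under the relabelling, together with the bookkeeping of how conjugation by $J$ permutes the four entries of $(M')^{\#}$. I expect this matching to be the only potential pitfall, since a sign or a transposition mismatch would spoil the final formula.
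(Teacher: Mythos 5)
Your route is in fact the paper's route: conjugate by the swap $J$, apply Theorem 3.2 to $JMJ=\left(\begin{smallmatrix}D&C\\ B&A\end{smallmatrix}\right)$ with the relabelling $(A',B',C',D')=(D,C,B,A)$, and translate the formula back; your bookkeeping of the five block identities and of how $J$ permutes the entries of $(M')^{\#}$ is correct and reproduces the stated formula. However, there is one genuine gap, and it is exactly the point the paper spends the first half of its proof on: Theorem 3.2 applied to $M'$ requires the product $B'C'=CB$ to have a group inverse, whereas the corollary only assumes $BC\in\mathcal{A}^{\#}$. Your parenthetical remark that ``the existence of $(CB)^{\#}$ is implicit in the hypotheses, just as in Theorem 3.2'' is not a justification: after the relabelling, the hypothesis of Theorem 3.2 concerns $CB$, which is precisely what is \emph{not} assumed, so this must be proved, not carried over.

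The missing argument is short but essential. Since $BC$ has a group (hence Drazin) inverse, Cline's formula (\cite[Theorem 2.3]{J}) gives that $CB$ has a Drazin inverse $(CB)^D$; then the hypothesis $B(CB)^{\pi}=0$ yields $CB(CB)^{\pi}=C\bigl(B(CB)^{\pi}\bigr)=0$, i.e.\ $CB=(CB)^2(CB)^D$, so $CB$ has index at most one and therefore $CB\in\mathcal{A}^{\#}$ with $(CB)^{\#}=(CB)^D$. With this inserted before you invoke Theorem 3.2, your proof is complete and coincides with the paper's.
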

\begin{proof} Since $BC$ has group inverse, it has Drazin inverse. According to Cline's formula (~\cite[Theorem 2.3]{J}), $CB$ has Drazin inverse.
Hence, $(CB)(CB)^D=(CB)^D(CB), (CB)^D=(CB)^D(CB)(CB)^D$. Since $B(CB)^{\pi}=0$, we have $CB=(CB)(CB)^D(CB)$. Therefore $CB$ has group invrse.
Applying Theorem 3.2, we prove that $\left(
\begin{array}{cc}
D&C\\
B&A
\end{array}
\right)$ has group inverse. Clearly,
$$M=\left(
\begin{array}{cc}
0&I\\
I&0
\end{array}
\right)\left(
\begin{array}{cc}
D&C\\
B&A
\end{array}
\right)\left(
\begin{array}{cc}
0&I\\
I&0
\end{array}
\right),$$ and so $M$ has group inverse. Moreover,
$\left(
\begin{array}{cc}
D&C\\
B&A
\end{array}
\right)$ has group inverse. Clearly,
$$M^{\#}=\left(
\begin{array}{cc}
0&I\\
I&0
\end{array}
\right)\left(
\begin{array}{cc}
D&C\\
B&A
\end{array}
\right)^{\#}\left(
\begin{array}{cc}
0&I\\
I&0
\end{array}
\right),$$ as required.\end{proof}

\begin{cor} Let $A\in {\Bbb C}^{m\times m},D\in {\Bbb C}^{n\times n}$ have group inverses.
If $AB=0, BD=0, B(CB)^{\pi}=0, C(BC)^{\pi}=0$ and $DC=\lambda CA$, then $M\in {\Bbb C}^{(m+n)\times (m+n)}$ has group inverse and
$$M^{\#}=\left(
\begin{array}{cc}
A^{\#}+A^{\#}(BC)^{\pi}&0\\
-D^{\#}CA^{\#}+(D^{\#})^2CA^{\pi}&D^{\#}+D^{\#}(CB)^{\pi}
\end{array}
\right).$$\end{cor}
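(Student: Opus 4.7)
The plan is to reduce the corollary to Theorem~3.2 by specializing the Banach spaces of Section~3 to $X={\Bbb C}^m$ and $Y={\Bbb C}^n$. With this choice $\mathcal{L}(X\oplus Y)$ coincides with ${\Bbb C}^{(m+n)\times (m+n)}$, and $M$ is exactly the block operator matrix studied in Section~3 with $A\in\mathcal{L}(X)$ and $D\in\mathcal{L}(Y)$. Once all hypotheses of Theorem~3.2 are in place, both the existence of $M^{\#}$ and the stated closed form follow at once, since the displayed formula coincides verbatim with the one derived in the proof of Theorem~3.2.

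The single hypothesis of Theorem~3.2 not stated outright in the corollary is the group invertibility of $BC$. To supply it, I would exploit the fact that every complex square matrix admits a Drazin inverse. In particular, $BC\in{\Bbb C}^{m\times m}$ has a Drazin inverse $(BC)^D$, so the spectral idempotent $(BC)^{\pi}=I-(BC)(BC)^D$ is well defined. The assumed identity $C(BC)^{\pi}=0$ then reads $C=C(BC)(BC)^D$; left-multiplying by $B$ yields
$$BC=(BC)(BC)(BC)^D=(BC)^2(BC)^D,$$
which shows $BC$ has index at most one and is therefore group invertible, with $(BC)^{\#}=(BC)^D$. A symmetric argument, starting from $B(CB)^{\pi}=0$ and the Drazin inverse of $CB\in{\Bbb C}^{n\times n}$, shows that $CB$ is also group invertible; this matches the use of $(CB)^{\#}$ inside the formula for $M^{\#}$.

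With the group invertibility of $BC$ secured, all remaining hypotheses---$AB=0$, $BD=0$, $DC=\lambda CA$, together with the group invertibility of $A$, $D$, and $BC$---are in force, and I would invoke Theorem~3.2 directly to obtain both the existence of the group inverse and the explicit formula.

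The step I expect to be the main point, rather than a genuine obstacle, is this automatic promotion from Drazin to group inverse for $BC$ (and for $CB$) in the finite-dimensional complex setting via the spectral-idempotent hypotheses; beyond this observation the corollary is a direct specialization of Theorem~3.2.
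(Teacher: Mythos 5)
Your proposal is correct and follows the same route as the paper: the paper's proof likewise uses the fact that every complex square matrix is Drazin invertible, deduces from $C(BC)^{\pi}=0$ that $BC=(BC)^2(BC)^D$ and hence that $BC$ is group invertible, and then applies Theorem~3.2. Your extra observation about $CB$ and your remark that the stated formula is the one actually derived in the proof of Theorem~3.2 are both consistent with the paper.
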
\begin{proof} Since $C(BC)^{\pi}=0$, we have $BC=(BC)^2(BC)^D$' hence, $BC$ has group inverse. This completes the proof by Theorem 3.2.\end{proof}

We are now ready to prove:

\begin{thm} Let $A,D$ and $BC$ have group inverses. If $CA=0, DC=0, (CB)^{\pi}C=0, (BC)^{\pi}B=0$ and $AB=\lambda BD,$ then $M\in M_2(\mathcal{A})^{\#}$ and
$$M^{\#}=\left(
\begin{array}{cc}
A^{\#}+(BC)^{\pi}A^{\#}&-A^{\#}BD^{\#}+A^{\pi}B(D^{\#})^2\\
0&D^{\#}+(CB)^{\pi}D^{\#}
\end{array}
\right).$$\end{thm}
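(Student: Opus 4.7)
My plan is to follow the structure of the proof of Theorem 3.2, with the one-sided hypotheses placed on the opposite side. I would write $M = P + Q$ with $P = \left(\begin{array}{cc} A & 0 \\ 0 & D\end{array}\right)$ and $Q = \left(\begin{array}{cc} 0 & B \\ C & 0\end{array}\right)$. The assumptions $CA=0$ and $DC=0$ reduce $PQ$ and $QP$ to matrices with only a $(1,2)$-entry (equal to $AB$ and $BD$ respectively), and $AB = \lambda BD$ then gives $PQ = \lambda QP$; this is the commutation required to invoke Theorem 2.2.

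Next I would establish $Q \in M_2(\mathcal{A})^{\#}$. Exactly as in Theorem 3.2, $BC \in \mathcal{A}^{\#}$ together with Cline's formula produces a Drazin inverse $Q^{D}$ of the expected off-diagonal form. The new hypotheses $(BC)^{\pi}B = 0$ and $(CB)^{\pi}C = 0$ yield $Q^{\pi}Q = 0$ (instead of the $QQ^{\pi}=0$ used in Theorem 3.2); combined with $Q^{D}Q = QQ^{D}$ this still forces $Q = Q^{D}Q^{2}$, so $Q^{\#} = Q^{D}$ exists and, in particular, $(CB)^{\#}$ exists.

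Applying Theorem 2.2 reduces the group invertibility of $M$ to that of $P(I+P^{\#}Q) = \left(\begin{array}{cc} A & AA^{\#}B \\ DD^{\#}C & D\end{array}\right)$, with $M^{\#} = [P(I+P^{\#}Q)]^{\#} + P^{\#}Q^{\pi}$. Left-multiplying $DC=0$ by $D^{\#}$ kills the $(2,1)$-entry, leaving the upper-triangular $N = \left(\begin{array}{cc} A & AA^{\#}B \\ 0 & D\end{array}\right)$, and right-multiplying $AB=\lambda BD$ by $D^{\pi}$ (using $DD^{\pi}=0$) gives $(AA^{\#}B)D^{\pi} = 0$. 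An upper-triangular analog of Lemma 3.1 — proved by the same direct verification as Lemma 3.1 after transposing rows and columns — then produces $N^{\#}$ explicitly; assembling $M^{\#} = N^{\#} + P^{\#}Q^{\pi}$ yields a formula of the claimed shape.

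The main obstacle is not conceptual but bookkeeping: a direct computation gives the top-left entry as $A^{\#}+A^{\#}(BC)^{\pi}$, the top-right as $-A^{\#}BD^{\#}$, and the bottom-right as $D^{\#}+D^{\#}(CB)^{\pi}$, whereas the theorem displays $A^{\#}+(BC)^{\pi}A^{\#}$, $-A^{\#}BD^{\#}+A^{\pi}B(D^{\#})^{2}$, and $D^{\#}+(CB)^{\pi}D^{\#}$. These match because $ABC = \lambda BDC = 0$ and $BCA = 0$ force $A^{\#}BC = 0 = BCA^{\#}$, so $A^{\#}(BC)^{\pi} = A^{\#} = (BC)^{\pi}A^{\#}$ (and symmetrically for $D^{\#}$, $CB$); and Lemma 2.1 applied to $P,Q$ gives $AA^{\#}B = BDD^{\#}$, hence $A^{\pi}B = BD^{\pi}$, which together with $D^{\pi}D^{\#} = 0$ yields $A^{\pi}B(D^{\#})^{2} = 0$.
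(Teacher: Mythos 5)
Your proposal is correct, but it takes a genuinely different route from the paper. The paper proves this theorem by transposition: it checks that $A^{T},D^{T},C^{T}B^{T}$ have group inverses and that the hypotheses $CA=0$, $DC=0$, $(CB)^{\pi}C=0$, $(BC)^{\pi}B=0$, $AB=\lambda BD$ transpose exactly into the hypotheses of Theorem 3.2 for $M^{T}=\left(\begin{smallmatrix} A^{T} & C^{T}\\ B^{T} & D^{T}\end{smallmatrix}\right)$, then applies Theorem 3.2 to $M^{T}$ and transposes the resulting formula back. You instead re-run the proof of Theorem 3.2 directly on $M$: the same splitting $M=P+Q$, with $PQ=\lambda QP$ now coming from $CA=0$, $DC=0$, $AB=\lambda BD$; group invertibility of $Q$ from $Q^{\pi}Q=0$ (your mirrored hypotheses) instead of $QQ^{\pi}=0$; Theorem 2.2 reducing $M$ to the upper-triangular matrix $\left(\begin{smallmatrix} A & AA^{\#}B\\ 0 & D\end{smallmatrix}\right)$, which you handle by the upper-triangular analogue of Lemma 3.1, legitimately applicable since $AA^{\#}BD^{\pi}=A^{\#}(AB)D^{\pi}=\lambda A^{\#}BDD^{\pi}=0$. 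Your reconciliation of the resulting formula with the one displayed in the theorem is also sound: $ABC=\lambda BDC=0$, $BCA=0$, $DCB=0$ and $CBD=\lambda^{-1}CAB=0$ give $A^{\#}(BC)^{\pi}=A^{\#}=(BC)^{\pi}A^{\#}$ and $D^{\#}(CB)^{\pi}=D^{\#}=(CB)^{\pi}D^{\#}$, while Lemma 2.1 applied to $P,Q$ gives $AA^{\#}B=BDD^{\#}$, hence $A^{\pi}B(D^{\#})^{2}=BD^{\pi}(D^{\#})^{2}=0$; so both expressions reduce to $\left(\begin{smallmatrix} 2A^{\#} & -A^{\#}BD^{\#}\\ 0 & 2D^{\#}\end{smallmatrix}\right)$. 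As for what each approach buys: the paper's transpose trick is shorter, but in the Banach-algebra/operator setting it tacitly uses that transposition is an anti-isomorphism (into the opposite algebra, or adjoints on dual spaces) preserving group inverses, a point worth making explicit; your direct argument stays entirely inside $M_{2}(\mathcal{A})$, at the cost of redoing the Theorem 3.2 computation and proving the dual of Lemma 3.1, and it has the bonus of showing that the published formula simplifies (the term $A^{\pi}B(D^{\#})^{2}$ actually vanishes).
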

\begin{proof} By hypothesis, $A^T,D^T$ and $C^TB^T$ have group inverses. Moreover, we check that
$$\begin{array}{c}
D^TB^T=\lambda B^TA^T, A^TC^T=0, C^TD^T=0,\\
C^T(B^TC^T)^{\pi}=0, B^T(C^TB^T)^{\pi}=0.
\end{array}$$
Applying Theorem 3.2 to the transpose $M^T$ of $M$, where $$M^T=\left(
  \begin{array}{cc}
    A^T & C^T \\
    B^T & D^T
  \end{array}
\right).$$ Then $M^T$ has group inverse, and so has $M$. Moreover, we derive
$${\small\begin{array}{l}
(M^T)^{\#}=\\
\left(
\begin{array}{cc}
(A^T)^{\#}+(A^T)^{\#}(C^TB^T)^{\pi}&-(D^T)^{\#}B^T(A^T)^{\#}+[(D^T)^{\#}]^2B^T(A^T)^{\pi}\\
0&(D^T)^{\#}+(D^T)^{\#}(B^TC^T)^{\pi}
\end{array}
\right).
\end{array}}$$ This completes the proof.\end{proof}

As a consequence of the above, we derive

\begin{cor} Let $A,D$ and $BC$ have group inverses. If $AB=0, BD=0 (BC)^{\pi}B=0, (CB)^{\pi}C=0$ and $DC=\lambda CA,$ then $M\in M_2(\mathcal{A})^{\#}$ and
$$M^{\#}=\left(
\begin{array}{cc}
A^{\#}+(BC)^{\pi}A^{\#}&0\\
-D^{\#}CA^{\#}+D^{\pi}C(A^{\#})^2&D^{\#}+(CB)^{\pi}D^{\#}
\end{array}
\right).$$\end{cor}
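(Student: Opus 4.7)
My plan is to obtain Corollary 3.6 from Theorem 3.5 by conjugating with the block permutation $P=\left(\begin{smallmatrix} 0 & I \\ I & 0\end{smallmatrix}\right)$, mirroring exactly the device that was used to derive Corollary 3.3 from Theorem 3.2. Since $P^{2}=I$, the matrix $M'=PMP=\left(\begin{smallmatrix} D & C \\ B & A\end{smallmatrix}\right)$ is similar to $M$, and because the group inverse is preserved under this involution we have $M^{\#}=P(M')^{\#}P$. Thus it suffices to produce $(M')^{\#}$ and then permute its blocks.

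The first step is to translate the hypotheses. Writing $M'=\left(\begin{smallmatrix} A' & B' \\ C' & D'\end{smallmatrix}\right)$ with $A'=D$, $B'=C$, $C'=B$, $D'=A$, I would verify that the assumptions of Theorem 3.5 on $M'$ read
\begin{itemize}
\item $C'A'=BD=0$ and $D'C'=AB=0$, from the hypotheses $BD=0$ and $AB=0$ on $M$;
\item $(C'B')^{\pi}C'=(BC)^{\pi}B=0$ and $(B'C')^{\pi}B'=(CB)^{\pi}C=0$, from the spectral idempotent conditions on $M$;
\item $A'B'=DC=\lambda CA=\lambda B'D'$, from $DC=\lambda CA$.
\end{itemize}
All five hypotheses of Theorem 3.5 are then in place for $M'$, and the product $B'C'=CB$ has a group inverse since $BC$ does (by Cline's formula, invoked already in the proof of Corollary 3.3).

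The second step is just to copy the formula from Theorem 3.5 with the primed letters, which yields
$$(M')^{\#}=\left(\begin{array}{cc} D^{\#}+(CB)^{\pi}D^{\#} & -D^{\#}CA^{\#}+D^{\pi}C(A^{\#})^{2} \\ 0 & A^{\#}+(BC)^{\pi}A^{\#}\end{array}\right).$$
Multiplying on both sides by $P$ swaps the two block rows and then the two block columns, sending the $(1,1)$-entry of $(M')^{\#}$ to the $(2,2)$-entry of $M^{\#}$, the $(2,2)$-entry to the $(1,1)$-entry, and the $(1,2)$-entry to the $(2,1)$-entry (while the original $(2,1)$-zero becomes the new $(1,2)$-zero). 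This reproduces the formula claimed for $M^{\#}$.

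I do not anticipate a genuine obstacle: the only thing that could go wrong is a clerical slip in matching the four block positions after the two-sided multiplication by $P$, so the one verification I would do with care is that $P\left(\begin{smallmatrix} X_{11} & X_{12} \\ X_{21} & X_{22}\end{smallmatrix}\right)P=\left(\begin{smallmatrix} X_{22} & X_{21} \\ X_{12} & X_{11}\end{smallmatrix}\right)$, so that the off-diagonal entry of $(M')^{\#}$ in the $(1,2)$-slot ends up correctly in the $(2,1)$-slot of $M^{\#}$.
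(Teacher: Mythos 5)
Your proposal is correct and is essentially the paper's own argument: the paper likewise applies Theorem 3.5 to $\left(\begin{smallmatrix} D & C\\ B & A\end{smallmatrix}\right)$ and then transfers the result back to $M$ by conjugation with $\left(\begin{smallmatrix} 0 & I\\ I & 0\end{smallmatrix}\right)$, exactly as in the proof of Corollary 3.3. The only point to state a touch more carefully is that Cline's formula gives a Drazin inverse of $CB$, and the hypothesis $(CB)^{\pi}C=0$ (hence $(CB)^{\pi}CB=0$) is what upgrades it to a group inverse, mirroring the argument already given for Corollary 3.3.
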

\begin{proof} Applying Theorem 3.5, $\left(
\begin{array}{cc}
D&C\\
B&A
\end{array}
\right)$ has group inverse. As in the proof of Corollary 3.3, we obtain the result.\end{proof}

\begin{cor} Let $A\in {\Bbb C}^{m\times m},D\in {\Bbb C}^{n\times n}$ have group inverses. If $CA=0, DC=0, (CB)^{\pi}C=0, (BC)^{\pi}B=0$ and $AB=\lambda BD,$ then
$M\in {\Bbb C}^{(m+n)\times (m+n)}$ has group inverse and
$$M^{\#}=\left(
\begin{array}{cc}
A^{\#}+(BC)^{\pi}A^{\#}&-A^{\#}BD^{\#}+A^{\pi}B(D^{\#})^2\\
0&D^{\#}+(CB)^{\pi}D^{\#}
\end{array}
\right).$$\end{cor}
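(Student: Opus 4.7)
The plan is to reduce the claim directly to Theorem 3.5. The hypotheses of Theorem 3.5 require group invertibility of $A$, $D$, and $BC$, together with the relations $CA=0$, $DC=0$, $(CB)^{\pi}C=0$, $(BC)^{\pi}B=0$, and $AB=\lambda BD$. Every condition except the group invertibility of $BC$ is already assumed in the corollary, so the only substantive step is to upgrade a Drazin inverse of $BC$ to a group inverse in the present finite-dimensional setting.

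Since $B\in {\Bbb C}^{m\times n}$ and $C\in {\Bbb C}^{n\times m}$, the product $BC\in {\Bbb C}^{m\times m}$ is a square complex matrix and therefore admits a Drazin inverse $(BC)^{D}$. I would then mimic the short argument used in the proof of Corollary 3.4, but on the opposite side. Starting from $(BC)^{\pi}B=0$, right-multiplication by $C$ yields $(BC)^{\pi}(BC)=0$. Expanding $(BC)^{\pi}=I-BC(BC)^{D}$ and using that $BC$ commutes with $(BC)^{D}$, this rearranges to $BC=(BC)^{2}(BC)^{D}$. This is exactly the statement that the Drazin index of $BC$ is at most one, so $BC$ has group inverse and $(BC)^{\#}=(BC)^{D}$.

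With $BC$ now known to be group invertible, every hypothesis of Theorem 3.5 is in force. Applying Theorem 3.5 to $M$ yields simultaneously that $M\in {\Bbb C}^{(m+n)\times (m+n)}$ has group inverse and that $M^{\#}$ is given by the very formula appearing in the statement of the corollary, which completes the argument.

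I do not anticipate a real obstacle. The corollary bears the same relation to Theorem 3.5 as Corollary 3.4 bears to Theorem 3.2, and the only piece of content beyond quoting Theorem 3.5 is the short passage from $(BC)^{\pi}B=0$ to Drazin index at most one, which is a one-line manipulation of Drazin-inverse identities.
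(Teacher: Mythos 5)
Your proposal is correct and follows exactly the route the paper intends: the paper states this corollary without proof, but its model is the one-line argument of Corollary 3.4, namely that in ${\Bbb C}^{m\times m}$ the matrix $BC$ always has a Drazin inverse and the hypothesis $(BC)^{\pi}B=0$ gives $BC=(BC)^2(BC)^D$, hence group invertibility, after which Theorem 3.5 applies verbatim. Nothing is missing.
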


\vskip10mm

\end{document}